\newtheorem{thm}{Theorem}[section]
\newtheorem{corollary}[thm]{Corollary}
\newtheorem{lemma}[thm]{Lemma}
\newtheorem{remark}[thm]{Remark}
\title[Orbits and invariants]{Orbits and invariants of super Weyl groupoid}
\author{ A.N. Sergeev}\address{Department of Mathematics, Saratov State University
 Astrakhanskaya 83, Saratov 410012, Russia 
and National Research University Higher School of Economics, 
20 Myasnitskaya Ulitsa, Moscow 101000, Russia}
\email{SergeevAN@info.sgu.ru}
\author{A.P. Veselov}
\address{Department of Mathematical Sciences,
Loughborough University, Loughborough LE11 3TU, UK  and Faculty of Mechanics and Mathematics, Moscow State University, Moscow 119899, Russia}
\email{A.P.Veselov@lboro.ac.uk}
\begin{document}

\maketitle

\begin{abstract}

We study the orbits and polynomial invariants of certain affine action of the super Weyl groupoid of Lie superalgebra $\mathfrak {gl}(n,m)$, depending on a parameter. We show that for generic values of the parameter all the orbits are finite and separated by certain explicitly given invariants. We also describe explicitly the special set of parameters, for 
which the algebra of invariants is not finitely generated and does not separate the orbits, some of which are infinite. \end{abstract}

\tableofcontents

\section{Introduction}

Let $G$ be a finite group acting linearly on a finite dimensional vector space $V$ over field of characteristic zero and $P[V]^{G}$ be the algebra of the polynomial invariants. It is well-known that this algebra is finitely generated and separates the orbits, so that for any two orbits there exists an invariant $f \in P[V]^{G}$, which takes different values on these orbits, see e.g. \cite{Vinberg}, Thm. 11.105. Classical example is a finite Coxeter group generated by reflections in a real Euclidean space $V$, in which by Chevalley theorem the corresponding algebra of invariants is freely generated \cite{Hum}.

Let us consider now a finite groupoid $\mathfrak G$ acting on an affine space $V$ by partially defined affine maps (see the precise definitions in the next section). One can ask the same questions:

{\it Q1. Is the algebra of invariants $P[V]^{\mathfrak G}$ finitely generated?

Q2. Does this algebra separate the orbits of $\mathfrak G$?}

These questions seem to be closely related to the following question, which is trivial in the group case:

{\it Q3. Are all the orbits finite?}

As we will see, in general the answers to all these questions are negative.
We will demonstrate this in the case of the so called {\it super Weyl groupoid} $\mathcal{W}_{n,m}$, introduced in \cite {SV2} in relation with the Grothendieck ring of the Lie superalgebra $\mathfrak{gl} (n,m).$ We will consider a special affine action $\Phi_\kappa$ of this groupoid depending on a non-zero parameter $\kappa,$ which arosecannot from the theory of the deformed quantum Calogero-Moser systems \cite{SV1}. 

The algebra of invariants $P[V]^{\Phi_\kappa}$ for non-rational $\kappa$ is known to be finitely generated and is isomorphic to the algebra of the corresponding quantum Calogero-Moser integrals \cite{SV1}. 
A special case $\kappa=-1$ corresponds to the Lie superalgebra $\mathfrak{gl} (n,m)$, when the corresponding invariants are known as supersymmetric polynomials and generated by the characters of the polynomial representations \cite{Serge0}. The corresponding algebra is known to be not finitely generated and does not separate the orbits, some of which turned out to be infinite.

In this paper we study the situation for general $\kappa$ in more detail, in particular, for rational $\kappa.$

Let $\Phi_\kappa$ be the action of the super Weyl groupoid $\mathcal{W}_{n,m}$ described in the next section.
We say that the parameter $\kappa$ is {\it special} if \footnote{It is interesting that the same set of parameters appeared in a recent paper \cite{BCES} in connection with Cohen-Macaulay property, see more in the Concluding remarks.}
\begin{equation}
\label{specialp}
\kappa= \pm\frac{p}{q}\,\,\,\,\,  \text{for some} \,\, 1\leq p \leq m, \, 1\leq q \leq n.
\end{equation}

Our main results can be summarised as follows.

\begin{thm} 
\label{mainthm} All the orbits of $\mathcal{W}_{n,m}$ are finite if and only if $\kappa$ is not a negative special. 
In that case the algebra of polynomial invariants $P[V]^{\Phi_\kappa}$ is finitely generated. If additionally $\kappa$ is not a positive special, then this algebra separates the orbits.
\end{thm}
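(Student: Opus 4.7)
The plan is to handle the three assertions---orbit finiteness, finite generation, orbit separation---in turn, each pivoting on a root-by-root analysis of the partial affine maps $\Phi_\kappa(w)$.

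First, for the orbit-finiteness dichotomy, I would study compositions of the generating reflections. Since the underlying groupoid $\mathcal{W}_{n,m}$ is finite, any infinite orbit must arise through the translational part produced by composing two partial maps. Composing a pair of reflections associated with isotropic roots $\varepsilon_i - \delta_j$ and $\varepsilon_{i'} - \delta_{j'}$ yields a transformation whose linear part lies in the (finite) Weyl group of $\mathfrak{gl}(n,m)$ but whose translation has coefficients of the form $p+q\kappa$ for integers with $1\le p\le m$, $1\le q\le n$. Outside the negative special locus the resulting translation lattice is trivial on each orbit, so orbits are contained in explicitly bounded subsets and must be finite. Conversely, if $\kappa=-p/q$ is negative special, then the relevant composition loses its translational part along one direction while remaining nontrivial along another, and iterating it produces an arithmetic progression inside a single orbit, giving the desired infinite orbit.

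Second, for finite generation in the non-negative-special case, I would exhibit explicit candidate invariants---the deformed supersymmetric power sums $p_k^{(\kappa)}$ familiar from the quantum Calogero--Moser picture of \cite{SV1}---and show they generate $P[V]^{\Phi_\kappa}$. Their invariance is a direct check. For irrational $\kappa$ this is already the content of \cite{SV1}. For rational non-negative-special $\kappa$, I would upgrade the argument by semicontinuity: the dimension of the graded piece $P[V]^{\Phi_\kappa}_d$ is upper semicontinuous in $\kappa$, while finiteness of orbits (from step one) lets one bound it below by the generic value via a Reynolds-type averaging over each orbit. Matching Hilbert series forces the candidate algebra to fill out the whole invariant ring.

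Third, for orbit separation when $\kappa$ is neither negative nor positive special, I would argue via a Jacobian computation. The map $V\to\mathbb{C}^{n+m}$ given by $p_1^{(\kappa)},\dots,p_{n+m}^{(\kappa)}$ has Jacobian that factors, up to a Weyl-invariant nonzero constant, as a product over isotropic roots with scalar factors of the form $p+q\kappa$, $1\le p\le m$, $1\le q\le n$. These factors are all nonzero precisely when $\kappa$ avoids the positive special set (the negative special set having already been excluded). Nonvanishing of the Jacobian off the isotropic hyperplane arrangement then separates generic orbits; extension to the arrangement uses finiteness of orbits and a standard closure argument.

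The main obstacle I anticipate is the rational non-negative-special regime in the first two steps: showing directly that every orbit stays bounded without invoking integrability, and transferring the Hilbert-series calculation from the generic stratum to rational parameters, both require a fairly delicate combinatorial control of reduced words in $\mathcal{W}_{n,m}$ and of the $\kappa$-dependent translations they generate. The Jacobian factorization in the third step is the other key technical input, but once the factor structure is made explicit, the characterization of the positive special set as its vanishing locus is forced.
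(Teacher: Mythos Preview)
Your proposal has genuine gaps in all three parts, and in each case the paper's route is substantially different from what you sketch.

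For orbit finiteness, your ``translation lattice'' picture is based on a misconception. The groupoid elements $\tau_\alpha$ are \emph{pure shifts by integer vectors} (see (\ref{action1}): $u_i\to u_i+1$, $v_p\to v_p-1$), with no $\kappa$ in the translation; the $\kappa$-dependence sits entirely in the domain hyperplanes $u_i-\kappa v_p=\pm\tfrac12(1+\kappa)$. There is no lattice of translations that ``becomes trivial'' off the negative special locus, and composing $\tau_\alpha$'s never produces coefficients $p+q\kappa$. The real question is whether one can keep landing on admissible hyperplanes indefinitely, and your argument does not address that. The paper bypasses this combinatorics completely: since the invariants $q_l$ are constant on orbits, each orbit lies in the solution set of $q_l=c_l$, $l=1,\dots,n+m$; the affine B\'ezout theorem then reduces finiteness to showing that the leading homogeneous system $p_l=0$ has only the zero solution, which is a Vandermonde argument (Lemma~\ref{SV}) pinpointing exactly the negative special $\kappa$.

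For finite generation, Reynolds averaging does not exist for groupoid actions (orbit sizes vary from point to point, so there is no linear projection onto invariants), and your semicontinuity sandwich does not close: $\dim P[V]^{\Phi_\kappa}_d$ is \emph{lower} semicontinuous in $\kappa$, while $\dim(\Lambda_{n,m,\kappa})_d$ is \emph{upper} semicontinuous, so the inequalities go the wrong way to force equality. (Indeed for $\kappa=1$ one has $\Lambda_{1,1,1}\subsetneq P[V]^{\Phi_1}$.) The paper instead uses Artin--Tate: with $A=\mathbb C[q_1,\dots,q_{n+m}]$, $B=P[V]^{\Phi_\kappa}$, $C=P[V]$, Lemma~\ref{SV} makes $C$ finite over $A$, hence $B$ is a finitely generated $A$-algebra. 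For separation, your Jacobian claim is simply false: the Jacobian of $(p_1,\dots,p_{n+m})$ is $\kappa^n$ times a constant times the full Vandermonde $\prod_{i<i'}(x_i-x_{i'})\prod_{j<j'}(y_j-y_{j'})\prod_{i,j}(x_i-y_j)$, with no scalar factors of the form $p+q\kappa$; and even a nonvanishing Jacobian would give only local injectivity, not global orbit separation. The paper's mechanism is entirely different: it packages all the $q_l$ into the rational function $\varphi(t,x,y)=\frac{f(t)}{f(t+\kappa)}\frac{g(t-1)}{g(t)}$, shows (using finiteness of orbits) that every orbit contains a \emph{minimal} pair $(f,g)$, and then proves by induction on $mn$ that an $E$-equivalence class contains a \emph{unique} minimal pair, the key step forcing $\kappa=m/n$ if two distinct minimal pairs were equivalent.
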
 

For a special, explicitly described subalgebra $\Lambda_{n,m,\kappa} \subseteq P[V]^{\Phi_\kappa}$, which coincides with $P[V]^{\Phi_\kappa}$ for $\kappa$ not positive rational, we can be a bit more specific.

\begin{thm} 
The algebra $\Lambda_{n,m,\kappa}$ separates the orbits of $\Phi_\kappa$ if and only if 
$\kappa$ is not special.
\end{thm}

We conjecture that the full algebra of invariants $P[V]^{\Phi_\kappa}$ separates the orbits for positive special $\kappa$ as well. More generally, we conjecture that for any affine action of a finite groupoid with finite orbits, the invariants separate the orbits. We finish by considering some particular cases in more detail.


\section{Super Weyl groupoid and its actions}

A {\it groupoid} $\mathfrak G$ is a (small) category with all morphisms being invertible (see \cite{B,W} for the details). The set of objects is denoted as $\mathfrak B$ and called the {\it base}, the set of morphisms is usually denoted $\mathfrak G$ as groupoid itself.

If the base $\mathfrak B$ consists of one element then $\mathfrak G$ has a natural group structure. More generally, for any $x \in \mathfrak B$ one can associate an {\it isotropy group} $\mathfrak G_x$ consisting of all morphisms $g\in \mathfrak G$ from $x$ into itself. For any groupoid we have a natural equivalence relation on the base $\mathfrak B$, when $x \sim y$ if there exists a morphism $g\in \mathfrak G$ from $x$ to $y.$ 

For any set $X$ one can define the following groupoid $\mathfrak S(X)$, whose base consists of all possible subsets $Y \subset X$ and the morphisms are all possible bijections between them. By the {\it action of a groupoid $\mathfrak G$ on a set} $X$ we will mean the homomorphism  of $\mathfrak G$ into $\mathfrak S(X)$ (which is a functor between the corresponding categories). If $X$ is affine space, $Y \subset X$ are the affine subspaces and morphisms are affine bijections, then we will call it {\it affine action}.

The {\it orbit} $\mathcal O_x$ of a $\mathfrak G$-action on $X$ consists of all points $y \in X$, for which there exist elements $g_1,\dots, g_k \in \mathfrak G$ such that
$
y=g_k(g_{k-1}(\dots (g_1(x))\dots)).
$
In contrast to the group case, the product $g_k g_{k-1}\dots g_1$ in general may not exist. 

In this paper we will consider (a particular case) of the super Weyl groupoid \cite{SV2} related to any basic classical Lie superalgebra $\mathfrak g.$  The roots of $\mathfrak g$ form a generalised root system $R \subset V$ in Serganova's sense \cite{Serga}, which is a certain generalisation of the root system in the presence of the isotropic roots. For isotropic roots one cannot define the reflections, which leads to a well-known problem with defining Weyl group in this case. The reflections with respect to the non-isotropic roots generate the {\it small Weyl group} $W_0,$ which describes a partial symmetry of the system.

Consider the following groupoid $\mathfrak T_{iso}$ with the base $R_{iso},$ which is the set of all the isotropic roots in $R.$ 
The set of morphisms from $\alpha \rightarrow \beta$ is non-empty if and only if $\beta = \pm \alpha$ in which case it consists of just one element. We will denote the corresponding morphism $\alpha \rightarrow -\alpha$ as $\tau_{\alpha}, \alpha \in R_{iso}.$
The group $W_0$ is acting on $\mathfrak T_{iso}$ in a natural way: $\alpha \rightarrow w(\alpha),\,
\tau_{\alpha} \rightarrow \tau_{w(\alpha)}.$ 

The {\it super Weyl groupoid} 
\begin{equation}
\label{groupoid}
\mathcal{W}(R) = W_0 \coprod  W_0 \ltimes \mathfrak T_{iso}
\end{equation}
 is defined \cite{SV2} as the disjoint union of the group $W_0$ considered as a groupoid with a single point base $[W_0]$ and the semi-direct product groupoid $W_0 \ltimes \mathfrak T_{iso}$ with the base $R_{iso}.$ The disjoint union is a well defined operation on the groupoids \cite{B}.


In \cite{SV1} we defined the admissible deformations of generalised root systems. The roots remain the same, but the bilinear form $B$ on $V$ is deformed and depends for the classical series on one parameter $\kappa$, which is assumed to be non-zero. The case $\kappa=-1$ corresponds to the original generalised root system. 

Let $X= V$ with the deformed bilinear form $(\,, )$ and define the following affine action $\Phi_\kappa$ of the Weyl groupoid
$\mathcal{W}(R)$ on it. 
The base point $[W_0]$ maps to the whole space $V$.  Let $\alpha\in R_{iso} $ then $\tau_{\alpha}$  maps  the hyperplane $\Pi_{\alpha}$ defined by the equation $(\alpha,z)=-\frac12(\alpha,\alpha)$ into the hyperplane $\Pi_{-\alpha}$ defined by the equation $(\alpha,z)=\frac12(\alpha,\alpha)$.
The elements of the group $W_0$ are acting in a natural way and the element $\tau_{\alpha}$ acts as the shift 
\begin{equation}
\label{action}
\tau_{\alpha}(z) = z + \alpha \in \Pi_{-\alpha}, \,\, z \in \Pi_{\alpha}
\end{equation}
restricted to the hyperplane $\Pi_{\alpha}.$ 
 
As it was shown in \cite{SV1} the algebra of invariants of this action $P[V]^{\Phi_\kappa}$ is closely related to the algebra of quantum integrals of the corresponding deformed Calogero-Moser systems. In fact, the notion of the super Weyl groupoid \cite{SV2} was motivated by this relation and representation theory of classical Lie superalgebras: the Grothendieck ring $K(\mathfrak g)$ of finite dimensional representations of a basic classical Lie superalgebra $\mathfrak g$ is isomorphic to the ring of trigonometric invariants $\mathbb Z[P_0]^{\Phi_\kappa},$ where $\kappa=-1$ and $P_0$ is the weight lattice of the even part of $\mathfrak g$ (see the details in \cite{SV2}).

In the case of Lie superalgebra $\mathfrak {gl}(n,m)$ the root system in the basis 
$\varepsilon_{i},\ 1\le i\le n, \, \delta_{p}=\varepsilon_{p+n},\, 1\le p\le m$ is
$$
R_{0}=\{\varepsilon_{i}-\varepsilon_{j}, \delta_{p}-\delta_{q}, \, 1\le i \ne j\le n,\, 1\le p \ne q\le m\}, 
$$ 
$$
R_{1}=\{\pm(\varepsilon_{i}- \delta_{p}), \quad 1\le i\le n, \,
1\le p\le m\} = R_{iso}.
 $$ 
 The small Weyl group $W_0=
S_{n}\times S_{m}$ acts by separately
 permuting $\varepsilon_{i},\; i=1,\dots, n$ and
$\delta_{p},\; p=1,\dots, m.$ 
The corresponding super Weyl groupoid we denote as $\mathcal W_{n,m}.$ 
 
 The deformed bilinear form is determined by  
$$
(\varepsilon_{i},\varepsilon_{i})=1,\:
(\varepsilon_{i},\varepsilon_{j})=0,\: i\ne j,\:
(\delta_{p},\delta_{q})=\kappa,\: (\delta_{p},\delta_{q})=0,\: p\ne
q,\: (\varepsilon_{i},\delta_{p})=0. 
$$

For $\alpha= \varepsilon_{i}- \delta_{p}\in R_{iso}$ the corresponding hyperplanes $\Pi_{\pm\alpha}$ have the equations 
\begin{equation}
\label{Pi}
u_{i}-\kappa v_{p}=\pm\frac{1}{2}(1+\kappa).
\end{equation} 
The corresponding element $\tau_{\alpha}\in \mathcal W_{n,m}$ acts on the hyperplane   $\Pi_{-\alpha}$ by the formula
\begin{equation}
\label{action1}
 \tau_{\alpha}(z) =  (u_{1},\dots,u_{i}+1,\dots,u_{n},v_{1},\dots,v_{p}-1,\dots,v_{m})
\end{equation}
where $z=(u_{1},\dots,u_{n},v_{1},\dots,v_{m}).$

In the simplest example $n=m=1$ we have two lines $L_\pm$ on the plane 
defined by $$u-\kappa v=\pm\frac{1}{2}(1+\kappa)$$ and the shifts:
$(u,v) \rightarrow (u+1, v-1)$
mapping the line $L_-$ to  
$L_+$ and its inverse
$(u,v) \rightarrow (u-1, v+1).$
In this case we have a very simple groupoid $\mathfrak T_2$ with the base consisting of two points 
connected by two morphisms, which are inverse to each other. 

For $\kappa \neq -1$ the orbits are single points outside of these lines and the pairs of points
$(u,v), (u+1,v-1)$ on these lines. The case $\kappa=-1$ is special: in this case two lines are the same line $L$ given by
$u+v=0$ and preserved by the shifts. The orbits are still single points outside $L$, but on the line they are infinite and consist of
the points $(u+l,-u-l),\,\, l \in \mathbb Z.$

For general $m$ and $n$ the situation is more complicated, but as we will show now is still similar.

\section{Orbits and invariants}

By definition the algebra of invariants $P[V]^{\Phi_\kappa} \subset \mathbb{C}[u_{1},\dots,u_{n},v_{1},\dots,v_{m}]$ of the groupoid action consists of the polynomials $f$, which are symmetric in
$u_{1},\dots,u_{n}$ and $v_{1},\dots,v_{m}$ separately and satisfy
the {\it quasi-invariance conditions} (in terminology of \cite{SV1})
\begin{equation}
\label{inv}
f(u+\frac{1}{2}\varepsilon_{i},v-\frac{1}{2} \delta_{p})= f(u-\frac{1}{2}\varepsilon_{i},v+\frac{1}{2} \delta_{p})
\end{equation}
on each hyperplane $u_{i}-\kappa v_{p}=0$ for $i=1,\dots, n$ and
$p=1,\dots, m$.

This algebra first appeared in \cite{SV1} as the algebra of quantum integrals of the deformed Calogero-Moser system.
For generic $\kappa$ (which is always assumed to be non-zero) this algebra can be described as follows.

\begin{thm}  
If $\kappa$ is not positive rational, then the algebra of invariants $P[V]^{\Phi_\kappa}$ 
coincides with the algebra $\Lambda_{n,m,\kappa}$ generated by the polynomials
\begin{equation}
\label{pset}
q_{l}(x,y,\kappa)=\sum_{i=1}^n
[x_i^{l+1}-(x_i-\kappa)^{l+1}]+\sum_{p=1}^m [(y_p+1)^{l+1}-y_p^{l+1}], \,\, l \in \mathbb N,
\end{equation}
where
\begin{equation}
\label{xy}
x_i=u_i+\frac{1}{2}+\kappa, \,\,\quad y_p=\kappa v_p+\frac{\kappa}{2}.
\end{equation}
\end{thm}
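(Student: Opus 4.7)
The inclusion $\Lambda_{n,m,\kappa} \subseteq P[V]^{\Phi_\kappa}$ will be a direct verification. Symmetry of each $q_l$ under $S_n \times S_m$ is manifest from its definition as a sum over indices. For the quasi-invariance on the hyperplane $u_i - \kappa v_p = 0$, I would rewrite everything in the variables~(\ref{xy}): the hyperplane becomes $x_i - y_p = (1+\kappa)/2$, and the half-shifts in~(\ref{inv}) become $x_i \mapsto x_i \pm 1/2$, $y_p \mapsto y_p \mp \kappa/2$. All summands of $q_l$ not involving either $x_i$ or $y_p$ are untouched, while the four $(l+1)$-powers from the $i$-th and $p$-th summands pair up on the hyperplane — one checks for instance $x_i + 1/2 = y_p + \kappa/2 + 1$ and $x_i - 1/2 - \kappa = y_p - \kappa/2$ — so the difference of the two sides of~(\ref{inv}) telescopes identically to zero.

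For the opposite inclusion and finite generation, the natural strategy is the degree filtration. Computing the top-degree homogeneous part of $q_l$ yields
\[ \mathrm{gr}(q_l) \;=\; (l+1)\,\sigma_l, \qquad \sigma_l \;:=\; \kappa \sum_{i=1}^n x_i^l \,+\, \sum_{p=1}^m y_p^l, \]
the ``deformed super Newton sums''. Consequently $\mathrm{gr}\,\Lambda_{n,m,\kappa}$ is the subalgebra of $\mathbb{C}[x,y]^{S_n\times S_m}$ generated by the $\sigma_l$. The plan is to show that under the hypothesis ``$\kappa$ not positive rational'' one has the graded identification $\mathrm{gr}\,\Lambda_{n,m,\kappa} = \mathrm{gr}\,P[V]^{\Phi_\kappa}$, and then descend by degree induction: given $f \in P[V]^{\Phi_\kappa}$ of top degree $d$, choose $g \in \Lambda_{n,m,\kappa}$ with the same principal symbol, so that $f - g \in P[V]^{\Phi_\kappa}$ has degree strictly less than $d$, and iterate.

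Finite generation of $\Lambda_{n,m,\kappa}$ should then follow from the Newton-type relations among the $\sigma_l$: for $l > n+m$ each $\sigma_l$ is a polynomial in $\sigma_1, \dots, \sigma_{n+m}$, and lifting these identities to $\Lambda_{n,m,\kappa}$ (absorbing the lower-degree corrections at subsequent induction steps) shows that $q_1, \dots, q_{n+m}$ generate the whole algebra.

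\textbf{Main obstacle.} The critical step is the graded identification. Taking principal symbols in~(\ref{inv}) converts the integer-step difference condition on the hyperplane into a first-order hyperplane-restriction condition on the leading part of $f$. The core of the argument is to show that the algebra of $S_n \times S_m$-symmetric polynomials satisfying all these symbol conditions coincides precisely with the algebra generated by the $\sigma_l$. This is the leading-order version of the Harish-Chandra-type description of the algebra of quantum integrals of the deformed Calogero--Moser system established in~\cite{SV1}, and the ``$\kappa$ not positive rational'' hypothesis is needed there to ensure that the symbol conditions cut out the $\sigma_l$-algebra cleanly, without spurious extra solutions coming from resonances among the shifts on different hyperplanes.
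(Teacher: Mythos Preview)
Your approach to the identification $P[V]^{\Phi_\kappa} = \Lambda_{n,m,\kappa}$ is essentially the paper's: verify directly that each $q_l$ is invariant, observe that its leading term is a nonzero scalar multiple of the deformed power sum $p_l$, and invoke \cite{SV1} for the hard input. The paper packages the last step slightly differently---it quotes from \cite{SV1} the filtered statement that the deformed Bernoulli sums $b_l$ already generate $P[V]^{\Phi_\kappa}$, and then compares the leading terms of $b_l$ and $q_l$---but this is the same filtration/degree-induction argument you spell out, just compressed. (The paper also checks invariance on the action hyperplane $x_i=y_p$ with the full shift $x_i\mapsto x_i+1$, $y_p\mapsto y_p-\kappa$, rather than on the quasi-invariance hyperplane with the half-shifts as you do; the two formulations are equivalent.)

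There is, however, a genuine gap in your finite-generation argument. The claim that ``for $l>n+m$ each $\sigma_l$ is a polynomial in $\sigma_1,\dots,\sigma_{n+m}$'' is \emph{false} under the stated hypothesis: it fails exactly when $\kappa$ is negative special, and such $\kappa$ (for instance $\kappa=-1$) are permitted by ``$\kappa$ not positive rational''. Indeed for $\kappa=-1$ the algebra $\Lambda_{n,m,-1}$ is the algebra of supersymmetric polynomials, which is not finitely generated, as the paper itself notes later. So the Newton-relation step cannot go through in the generality claimed. In fairness, the paper's own proof does not address finite generation either---it establishes only the coincidence $P[V]^{\Phi_\kappa}=\Lambda_{n,m,\kappa}$, and finite generation under the correct hypothesis (``$\kappa$ not negative special'') is proved separately afterwards via the Artin--Tate theorem. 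The ``finitely generated'' clause in the theorem statement appears to be an overstatement.
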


\begin{proof}
Let us show first that $q_l$ are invariant under the groupoid action (\ref{action1}). This action in coordinates $(x,y)$ has a form
\begin{equation}
\label{actionxy}
(x_{1},\dots,x_{n},y_{1},\dots,y_{m}) \rightarrow (x_{1},\dots,x_{i}+1,\dots,x_{n},y_{1},\dots,y_{p}-\kappa,\dots,y_{m})
\end{equation}
on the hyperplane $x_i-y_p=0$ for $i=1,\dots,n$ and
$p=1,\dots,m$ with the usual action of $S_n\times S_m$ by permutations. The invariance of $q_l$ follows now from the obvious identity
$$
x_i^{l+1}-(x_i-\kappa)^{l+1}+(y_p+1)^{l+1}-y_p^{l+1}
$$
$$=(x_i+1)^{l+1}-(x_i+1-\kappa)^{l+1}+(y_p+1-\kappa)^{l+1}-(y_p-\kappa)^{l+1}
$$
when $x_i=y_p.$

From the results of \cite{SV1}, Section 4 it follows that for $\kappa \notin \mathbb Q_{>0}$ the algebra $P[V]^{\Phi_\kappa}$ is generated by the deformed Bernoulli sums
\begin{equation}
\label{bern}
b_{l}(u,v,\kappa)=\sum_{i=1}^n
B_{l}(u_{i}+\frac{1}{2})+\kappa^{l-1}\sum_{p=1}^m B_{l}(v_{p}+\frac{1}{2}), \,\, l \in \mathbb N,
\end{equation}
where $B_l(x)$ are classical Bernoulli polynomials.
Now the claim follows from the fact that the highest order terms of polynomials $b_{l}$ and $q_l$ in coordinates $(x,y)$ are $\kappa^{-1} p_l$ and $(l+1)p_l$ respectively, where $p_l$ are the deformed power sums
$$
p_{l}(x,y,\kappa)=\kappa(x_{1}^l+\dots+x_{n}^l)+y_{1}^l+\dots+y_{m}^l, \,\, l \in \mathbb N.
$$
\end{proof}

Recall that $\kappa$ is {\it negative special}  if $\kappa = -\frac{p}{q}$ for some $1\leq p \leq m, \, 1\leq q \leq n.$

\begin{thm} 
\label{finite}
All the orbits are finite if and only if $\kappa$ is not negative special.
\end{thm}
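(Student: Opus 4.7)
The plan is to prove the two implications of the equivalence separately. For the ``negative special $\Rightarrow$ infinite orbit'' direction, suppose $\kappa=-p/q$ with $\gcd(p,q)=1$, $1\le p\le m$, $1\le q\le n$. I would exhibit an infinite orbit explicitly by taking the initial point with $x_i^0=(i-1)p/q$ for $i=1,\ldots,q$, $y_j^0=j-1$ for $j=1,\ldots,p$, and remaining coordinates chosen generically so as to avoid every hyperplane of the action. Applying the shifts $\tau_{ij}^+$ in the lexicographic order $(1,1),(2,1),\ldots,(q,1),(1,2),\ldots,(q,p)$ and using the staircase structure of the $x_i^0$'s, one checks inductively that each shift is valid when applied and that after all $pq$ shifts the state equals $z^0$ translated by $+p$ on each of the $p+q$ involved coordinates; iterating this block yields infinitely many distinct orbit points.

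For the converse, suppose $\kappa$ is not negative special. Since all orbit coordinates lie in the countable discrete set $\bigcup_i(x_i^0+\mathbb Z)\cup\bigcup_j(y_j^0-\kappa\mathbb Z)$, infinitude of an orbit $\mathcal O$ is equivalent to unboundedness. Arguing by contradiction, pick $z^{(k)}\in\mathcal O$ with $M_k:=\|z^{(k)}\|_\infty\to\infty$ and, via Bolzano--Weierstrass, pass to a subsequence with $\tilde z^{(k)}:=z^{(k)}/M_k\to(u^*,v^*)$, $\|(u^*,v^*)\|_\infty=1$. Each invariant $q_l$ of Theorem~3.1 is a polynomial of degree $l$ in $(x,y)$ whose top-degree component equals $(l+1)\bigl(\kappa\sum_i x_i^l+\sum_j y_j^l\bigr)$; since $q_l(z^{(k)})=q_l(z^0)$ is constant, dividing by $M_k^l$ and taking the limit gives
\begin{equation}
\kappa\sum_{i=1}^n(u_i^*)^l+\sum_{j=1}^m(v_j^*)^l=0 \qquad (l=1,2,\ldots),
\end{equation}
which packages into the rational-function identity $R(t):=\kappa\sum_i u_i^* t/(1-u_i^* t)+\sum_j v_j^* t/(1-v_j^* t)\equiv 0$.

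The main obstacle is extracting a contradiction from $R\equiv 0$. The residue of $R$ at any pole $t=1/w$, $w\ne 0$, is a nonzero multiple of $\kappa\mu_w+\nu_w$, where $\mu_w=|\{i:u_i^*=w\}|$ and $\nu_w=|\{j:v_j^*=w\}|$. Vanishing at every pole forces $\kappa\mu_w+\nu_w=0$; since $\kappa\ne 0$ this rules out either multiplicity being zero and yields $\kappa=-\nu_w/\mu_w$ with $\mu_w,\nu_w\ge 1$. The normalisation $\|(u^*,v^*)\|_\infty=1$ guarantees that at least one coordinate of $(u^*,v^*)$ has absolute value $1$, hence some $w\ne 0$ appears in $\{u_i^*\}\cup\{v_j^*\}$ and supplies a genuine pole; at it we obtain $\kappa=-\nu_w/\mu_w$ with $1\le\mu_w\le n$, $1\le\nu_w\le m$, contradicting the non-special hypothesis on $\kappa$. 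The technical subtlety is this residue bookkeeping linking the vanishing of a rational function to the arithmetic condition defining the negative-special parameters.
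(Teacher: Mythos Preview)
Your proposal is correct and the underlying logic matches the paper's, but the packaging is different in two places.

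For the ``not negative special $\Rightarrow$ finite orbits'' direction, the paper invokes an affine B\'ezout theorem: since the top-degree parts of $q_1,\dots,q_{n+m}$ have only the trivial common zero (Lemma~\ref{SV}), the level set $\{q_l=c_l\}$ is finite, hence so is every orbit. You replace this algebraic-geometry step by a compactness argument---normalize an unbounded sequence of orbit points, extract a subsequential limit on the $\ell^\infty$-sphere, and observe that the limit satisfies the homogeneous system. Both routes hinge on the same fact (the homogeneous system $\kappa\sum x_i^l+\sum y_j^l=0$ has only the zero solution for non-special $\kappa$); the paper proves this via a Vandermonde determinant after grouping equal coordinates, whereas you prove it via residues of the generating rational function $R(t)$, which is the partial-fraction reformulation of the same linear-algebra step. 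Your version is more elementary in that it avoids projective B\'ezout, but the paper's route yields the quantitative bound of Corollary~\ref{finitecor} (at most $(m+n)!$ points per orbit), which your limit argument does not give.

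For the converse, your explicit staircase orbit is essentially the same construction as the paper's formula~(\ref{orb}); the paper simply writes down the orbit points directly while you describe the sequence of $pq$ shifts producing one period of the translation. One small remark: you assume $\gcd(p,q)=1$, which is harmless since any negative special $\kappa$ can be written in lowest terms with the reduced numerator and denominator still in range.
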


\begin{proof}
Note first that the polynomials $q_l(x,y,\kappa)$ are invariant for all values of parameter $\kappa.$
Since $q_l(x,y,\kappa)$ are constants on the orbits, it is enough to prove that the system
\begin{equation}
\label{system0}
q_l(x,y,\kappa)=c_l, \quad l=1,\dots, n+m
\end{equation} 
for non-special $\kappa$ has only finite number of solutions for all $c_1, \dots, c_{n+m}.$

To prove this we use the following result.

\begin{thm} ({\bf Affine B\`ezout's Theorem.})
\label{Bezout}
Let 
\begin{equation}
\label{system}
P_i(z)=0, \, i=1,\dots, N
\end{equation} 
be a polynomial system in $\mathbb C^N$, 
where $P_i(z)$ is a polynomial in $z \in \mathbb C^N$ with the highest homogeneous component 
$\bar P_i(z)$ of degree $m_i.$ If the system 
\begin{equation}
\label{system2}\bar P_i(z)=0, \, i=1,\dots, N 
\end{equation}
has only zero solution in $\mathbb C^N,$
then the system (\ref{system}) has only finitely many solutions in $\mathbb C^N$ with the sum of the multiplicities equal to $m_1\dots m_N.$
\end{thm}

\begin{proof}\footnote{We are very grateful to Askold Khovanskii for this proof and helpful comments.} 
Assume that the system (\ref{system}) has infinitely many solutions, then the corresponding 
algebraic solution set must contain an algebraic curve. The closure of the curve in the projective space $\mathbb CP^N$
must intersect the infinite hyperplane. But the intersection is described the system (\ref{system2}), 
which has no solution. Contradiction means that the system (\ref{system}) has only finitely many solutions, 
whose number is given by the classical projective B\`ezout's theorem (see e.g. \cite{Shaf}).
\end{proof}

\begin{lemma} \cite{SV1}
\label{SV}
If $\kappa$ is not negative special, then the system
$$\left\{
\begin{array}{rcl}
\kappa(x_{1}+\dots+x_{n})+y_{1}+\dots+y_{m}=0\\
\kappa(x_{1}^2+\dots+x_{n}^2)+y_{1}^2+\dots+y_{m}^2=0\\
\cdots\\
\kappa(x_{1}^{n+m}+\dots+x_{n}^{n+m})+y_{1}^{n+m}+\dots+y_{m}^{n+m}=0\\
\end{array}\right.
$$
has only zero solution in $\mathbb C^{n+m}.$ The converse is also true.
\end{lemma}

\begin{proof}
To prove this suppose that the system has non-zero solution. We can assume that all $x_i, y_j$ are non-zero. 
Let us identify equal  $x_i$ and $y_j$ as $\{z_{1}, \dots, z_{r}\}, r \le n+m, $ where all $z_j$
are different. Multiplicity of $z_j$ is a pair $(p_j, q_j)$, where
$p_j$ shows how many times $z_j$ enters the sequence $\{
x_{1},x_{2},\dots,x_{n}\}$ and $q_j$ is the same for 
the sequence $\{
y_{1},\dots,y_{m}\}$. 
Consider the first $r$ of these equations 
$$
\sum_{j=1}^{r}a_{j}z_{j}^{i}=0,\quad i=1,\dots r,
$$
where $a_{j}=\kappa p_{j}+q_{j}$ as a linear system on $a_j.$ Its determinant is of
Vandermonde type and is not zero since all $z_j$ are different and
non-zero. Hence all $a_j$ must be zero, which may happen only if $\kappa$ is negative special.

Conversely, if $\kappa = -\frac{p}{q}$ for some $1\leq p \leq m, \, 1\leq q \leq n$ then there is a non-zero solution of (\ref{system0})
with $x_1=x_2=\dots=x_q=y_1=y_2=\dots=y_p=a\neq 0$ and the rest of $x$'s and $y$'s to be zero.
\end{proof}

Combining this Lemma with the affine B\`ezout's theorem we conclude that if $\kappa$ is not negative special then the system (\ref{system0}) has only finite number of solutions for all $c_k$, and thus all the orbits are finite.

To prove the converse statement one should produce an infinite orbit for every special $\kappa=-p/q.$
Without loss of generality we can assume that $p=m, q=n.$ One can easily check that the set
$\{z=(x_1,\dots, x_n, y_1\dots,y_m)\},$
\begin{equation}
\label{orb}
x_i=a+ml+(1-i)\kappa, \quad y_j= a+ml+j-1,
\end{equation} 
with $l \in \mathbb Z, \, a \in \mathbb C$, belong to the same infinite orbit, which we denote $O(a)$.

Let us show this explicitly in the case $m=2, n=3, \kappa =-2/3$.
We have the following equivalences due to (\ref{actionxy}) 
$$
z(a):= (a,a-\kappa,a-2\kappa; a,a+1) \sim (a+1,a-\kappa,a-2\kappa; a-\kappa,a+1) 
$$
$$
\sim (a+1,a+1-\kappa,a-2\kappa; a-2\kappa,a+1)\sim (a+1,a+1-\kappa,a+1-2\kappa; a-3\kappa,a+1)
$$
$$
\sim (a+2,a+1-\kappa,a+1-2\kappa; a-3\kappa,a+1-\kappa)\sim (a+2,a+2-\kappa,a+1-2\kappa; a-3\kappa,a+1-2\kappa)
$$
$$
\sim (a+2,a+2-\kappa,a+2-2\kappa; a-3\kappa,a+1-3\kappa)=(a+2,a+2-\kappa,a+2-2\kappa; a+2,a+3)
$$
since $\kappa=-2/3.$ Thus $z(a) \sim z(a+2)$ and we have an infinite equivalence set
$$
\{(a+2l,a+2l-\kappa,a+2l-2\kappa; a+2l,a+2l+1), \, l \in \mathbb Z\} \subset O(a).
$$
The orbit $O(a)$ in general is larger than the set (\ref{orb}) (see the full orbit (\ref{orb2}) in $n=2,m=1$ case), but it is always invariant under the shift by $m$:
$
O(a+m)=O(a).
$
\end{proof}


\begin{corollary} 
\label{finitecor}
 If $\kappa$ is not negative special then any orbit consists of not more than $(m+n)!$ elements.
\end{corollary}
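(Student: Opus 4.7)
The plan is to apply the Affine B\`ezout Theorem (Theorem \ref{Bezout}) to the polynomial system
\begin{equation*}
q_l(x, y, \kappa) = c_l, \quad l = 1, \dots, n+m,
\end{equation*}
where the $c_l$ are the values taken by the invariants $q_l$ on a fixed orbit. Since the $q_l$ are $\Phi_\kappa$-invariants, every orbit is contained in such a common level set, so it is enough to bound the size of this level set in $\mathbb{C}^{n+m}$.

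First I would compute the degrees and leading homogeneous parts of the $q_l$ in the coordinates $(x,y)$. Because the highest terms $x_i^{l+1}$ in each difference $x_i^{l+1}-(x_i-\kappa)^{l+1}$ cancel, a direct binomial expansion shows that $q_l$ has total degree exactly $l$ and that its leading homogeneous component is a nonzero scalar multiple of the deformed power sum
\begin{equation*}
p_l(x,y,\kappa) = \kappa(x_1^l + \cdots + x_n^l) + y_1^l + \cdots + y_m^l
\end{equation*}
introduced in the proof of Theorem~3.1. In particular the leading parts $\bar q_1,\dots,\bar q_{n+m}$ are nonzero multiples of $p_1,\dots,p_{n+m}$.

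Since $\kappa$ is assumed not to be negative special, Lemma \ref{SV} gives that the system $p_l(x,y,\kappa)=0$, $l=1,\dots,n+m$, has only the trivial solution in $\mathbb{C}^{n+m}$, hence the same is true for the leading parts $\bar q_l=0$. Theorem \ref{Bezout} then applies to the system $q_l=c_l$ and yields only finitely many solutions, with sum of multiplicities equal to the product of the degrees
\begin{equation*}
1\cdot 2\cdots (n+m)=(n+m)!.
\end{equation*}
The level set therefore contains at most $(n+m)!$ distinct points, and a fortiori any orbit contained in it has cardinality at most $(n+m)!$.

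I do not expect a real obstacle here: everything reduces to the routine binomial computation identifying the leading part of $q_l$ with $(l+1)\,p_l$, after which Lemma \ref{SV} and Theorem \ref{Bezout} combine directly to yield the quantitative bound. The only point worth stating carefully is that multiplicities are counted with weight at least one, so the number of \emph{distinct} solutions is bounded by the total multiplicity $(n+m)!$.
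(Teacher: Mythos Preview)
Your proposal is correct and is essentially the argument implicit in the paper: the corollary is stated right after the proof of Theorem~\ref{finite}, and follows from that proof by reading off the product of degrees $1\cdot 2\cdots(n+m)=(n+m)!$ in the Affine B\'ezout bound (Theorem~\ref{Bezout}), exactly as you do. The only cosmetic difference is that the paper records the leading term of $q_l$ earlier, in the proof of Theorem~3.1, rather than recomputing it here.
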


This upper bound is optimal: for $\kappa =1$ we have the orbits consisting of $(m+n)!$ points (which are special orbits of the group $S_{m+n}$).

As another corollary of the proof we have the following result.

\begin{thm} 
\label{finitegener}
If $\kappa$ is not negative special, then the algebras $P[V]^{\Phi_\kappa}$ and  $\Lambda_{n,m,\kappa}$ are finitely generated.
\end{thm}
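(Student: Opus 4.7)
The plan is to realize $P[V]^{\Phi_\kappa}$ as an intermediate subalgebra between an explicit polynomial subalgebra and the full coordinate ring in such a way that the larger ring is module-finite over the smaller one, and then conclude by the Noetherian property.

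Concretely, let $A \subseteq P[V]^{\Phi_\kappa}$ be the subalgebra generated by the first $n+m$ of the invariants $q_l(x,y,\kappa)$ from (\ref{pset}), and let $C=\mathbb{C}[x_1,\dots,x_n,y_1,\dots,y_m]$ be the ambient polynomial ring, so that $A \subseteq P[V]^{\Phi_\kappa} \subseteq C$. The two items I would establish are (i) $C$ is finitely generated as an $A$-module, and (ii) $A$ is a Noetherian ring. Granted both, $P[V]^{\Phi_\kappa}$ is an $A$-submodule of the Noetherian $A$-module $C$, hence finitely generated over $A$, and therefore finitely generated as a $\mathbb{C}$-algebra. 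Item (ii) is immediate since $A$ is a finitely generated $\mathbb{C}$-algebra.

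For item (i), I would recycle the geometric input behind Theorem~\ref{finite}. The top-degree homogeneous components of $q_1,\dots,q_{n+m}$ are, up to nonzero scalars, the deformed power sums $p_1,\dots,p_{n+m}$, and by Lemma~\ref{SV} these have only the trivial common zero whenever $\kappa$ is not negative special. This is exactly the classical criterion for the polynomial morphism
$$
\Phi\colon\mathbb{C}^{n+m}\to\mathbb{C}^{n+m},\quad z\mapsto (q_1(z),\dots,q_{n+m}(z))
$$
to be finite: passing to the projective closure as in the proof of Theorem~\ref{Bezout}, the hypothesis forces $\Phi$ to extend to a morphism of projective spaces with no base points on the hyperplane at infinity, so $\Phi$ is proper, and a proper affine morphism with finite fibres is finite. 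Equivalently, each $x_i$ and each $y_p$ is integral over $A$, so $C$ is a finitely generated $A$-module.

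The main obstacle is precisely this upgrade from ``all fibres finite'' (the literal content of Theorem~\ref{Bezout}) to ``$C$ finite as an $A$-module''; this is a standard but not entirely formal step, and is the point at which one needs slightly more than the naive affine B\'ezout statement. The cleanest way to see it is the projective-closure argument sketched above, which is already implicit in the proof of Theorem~\ref{Bezout} and only requires that the leading forms have no common nontrivial zero --- the content of Lemma~\ref{SV}. Everything else in the argument is formal commutative algebra.
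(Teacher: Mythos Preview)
Your proof is correct and follows the same overall architecture as the paper's: sandwich $P[V]^{\Phi_\kappa}$ between $A=\mathbb{C}[q_1,\dots,q_{n+m}]$ and $C=\mathbb{C}[x,y]$, and use Lemma~\ref{SV} on the leading forms to see that $C$ is module-finite over $A$.

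The one difference is in the closing step. The paper invokes the Artin--Tate theorem (Atiyah--Macdonald, Prop.~7.8), which needs only that $C$ be finite over $B$ and a finitely generated $A$-algebra; you instead use the more direct observation that $B$ is an $A$-submodule of the Noetherian $A$-module $C$, hence itself a finite $A$-module. Your route is slightly more elementary and actually exploits more of what has been proved: since $C$ is already finite over $A$ (not just over $B$), Artin--Tate is overkill here. Both arguments are valid, and your discussion of why Lemma~\ref{SV} upgrades ``finite fibres'' to ``integral extension'' via the projective closure fills in a step the paper leaves implicit.
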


\begin{proof}
We use the following well-known result from commutative algebra sometimes called {\it Artin-Tate theorem} (see Proposition 7.8 from Atiyah and Macdonald \cite{AM}):

{\it Let $A \subset B \subset C$ be rings, such that $A$ is Noetherian, $C$ is finitely generated
as $A$-algebra and as a $B$-module.
Then $B$ is a finitely generated $A$-algebra. }

Let now $A$ be the polynomial algebra generated by the first $n+m$ invariants $q_1,\dots, q_{n+m}$, $B=P[V]^{\Phi_\kappa}$ is the full algebra of invariants and $C=P[V]$ is the algebra of all polynomials on $V.$ Then from Lemma \ref{SV} it follows that 
$C$ is finitely generated module over $A,$ and hence over $B.$ This implies also that $C$ is a finitely generated $A$-algebra.
By Artin-Tate theorem $B=P[V]^{\Phi_\kappa}$ is finitely generated over $A,$ and hence over field.
The same arguments work for $B=\Lambda_{n,m,\kappa}.$
\end{proof}

\begin{remark} 
The results of \cite{SV1, BCES} imply that as the generators of the corresponding algebra $P[V]^{\Phi_\kappa}$ for generic $\kappa$ one can choose $q_l (x,y, \kappa)$ given by (\ref{pset}) with $l=1,\dots, 2mn+\min(m,n).$
\end{remark}




\section{Separation of the orbits}

Now we would like to understand whether the algebra of polynomial invariants $P[V]^{\Phi_\kappa}$ separates the orbits.
We will answer this question for its subalgebra $\Lambda_{n,m,\kappa} \subseteq P[V]^{\Phi_\kappa}$ generated by the invariants (\ref{pset}). Recall that for $\kappa$ not positive rational these two algebras coincide.

Let us define the following equivalence relation $E$: we say that
$(x,y)\sim({ \tilde x},{\tilde y})$ if and only if 
\begin{equation}
\label{condi}
q_{l}(x,y,\kappa)=q_{l}({\tilde x}, {\tilde y},\kappa)
\end{equation}
for all $l \in \mathbb N.$ We can rewrite this equivalence more conveniently as follows.

Consider the function 
\begin{equation}
\label{phi}
\varphi(t,x,y)=\frac{f(t,x)}{f(t+\kappa,x)}\frac{g(t-1,y)}{g(t,y)}
\end{equation}
where $$f(t,x)=\prod_{i=1}^n(t-x_{i}),\: g(t,y)=\prod_{j=1}^m(t-y_j).$$
It is easy to see that its logarithmic derivative
$$F(t,x,y):=\frac{d}{dt} \log \varphi(t,x,y)=\sum_{l=1}^{\infty}q_l(x,y, \kappa)t^{-l-1}.
$$
 is the generating function of $q_l(x,y, \kappa).$

As a result we have

\begin{lemma}
\label{conditions}  A pair $(x, y)$ is equivalent to $({\tilde x},{\tilde y})$
if and only if 
\begin{equation}
\label{equiv}
\varphi(t,x,y)=\varphi(t,\tilde x,\tilde y).
\end{equation}
\end{lemma}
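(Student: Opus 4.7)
The plan is to leverage the generating-function identity displayed immediately before the lemma, which expresses $F(t,x,y) = \frac{d}{dt}\log\varphi(t,x,y)$ as a power series in $1/t$ whose coefficients are the invariants $q_l(x,y,\kappa)$. This identity makes $\varphi$ and the collection $\{q_l\}_{l\in\mathbb{N}}$ essentially interchangeable data, so the lemma reduces to elementary manipulations of rational functions and power series.

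The direction $(\Leftarrow)$ is immediate: if $\varphi(t,x,y) = \varphi(t,\tilde x,\tilde y)$ as rational functions of $t$, then their logarithmic derivatives $F(t,x,y)$ and $F(t,\tilde x,\tilde y)$ coincide, and reading off the Laurent coefficients at $t=\infty$ yields $q_l(x,y,\kappa) = q_l(\tilde x,\tilde y,\kappa)$ for every $l\in\mathbb{N}$.

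For the direction $(\Rightarrow)$, I would assume $q_l(x,y,\kappa) = q_l(\tilde x,\tilde y,\kappa)$ for all $l$ and conclude that the Laurent expansions at $t=\infty$ of $F(t,x,y)$ and $F(t,\tilde x,\tilde y)$ agree. Since both are rational functions of $t$, uniquely determined by their Laurent series at a point, they must then agree as elements of $\mathbb{C}(t)$. Consequently the logarithmic derivative of the ratio $\varphi(t,x,y)/\varphi(t,\tilde x,\tilde y)$ vanishes identically, so the ratio is a constant independent of $t$.

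The only delicate step — and thus the main (though mild) obstacle — is pinning down the value of this constant. I would resolve it by letting $t\to\infty$: the numerator and denominator of $f(t,x)/f(t+\kappa,x)$ are monic polynomials of the same degree $n$ in $t$, so this factor tends to $1$; analogously $g(t-1,y)/g(t,y)\to 1$. Hence $\varphi(t,x,y)\to 1$, and the same holds for $\varphi(t,\tilde x,\tilde y)$, forcing the constant ratio to equal $1$. This yields $\varphi(t,x,y) = \varphi(t,\tilde x,\tilde y)$ and completes the proof.
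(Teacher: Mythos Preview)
Your proof is correct and is essentially the approach the paper has in mind: the paper gives no explicit proof, simply writing ``As a result we have'' after displaying the generating-function identity $F(t,x,y)=\sum q_l t^{-l-1}$, so the lemma is treated as an immediate consequence. Your write-up supplies the details the paper omits, including the observation that $\varphi(t,x,y)\to 1$ as $t\to\infty$ to pin down the constant.
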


Consider now the quotient $X_{n,m}=V/S_n\times S_m$ of the space $$V=\{(x,y): x \in \mathbb C^n, \, y \in \mathbb C^m\}$$ by the small Weyl group $S_n\times S_m$, acting by permutations of $x$'s and $y$'s separately.
We can identify this space with the space of pairs of monic polynomials
$$
X_{n,m}=\{(f(t),g(t)): f, g \in \mathbb C[t], \, \deg \, p=n,\, \deg \, q=m\}
$$
by the formulae
$$
f(t)=\prod_{i=1}^n (t - x_i), \quad g(t)=\prod_{j=1}^m (t - y_j).
$$

The equivalence relation $E$ induces the equivalence relation $(f,g)\sim({\tilde f},{\tilde g})$ on $X_{n,m}$ defined by \begin{equation}
\label{equipol}
\frac{f(t)}{f(t+\kappa)}\frac{g(t-1)}{g(t)}
=\frac{\tilde f(t)}{\tilde f(t+\kappa)}\frac{\tilde g(t-1)}{\tilde g(t)}.
\end{equation}

The action of the groupoid $\Phi_\kappa$ can be reduced to the following multivalued action of the groupoid $\mathfrak T_2$ with 
two objects and the morphisms $\tau^{\pm1}$ connecting them.
The element $\tau$ acts on a pair of polynomials $(f,g) \in X_{n,m}$ with a common root $x_i=y_p$  by
\begin{equation}
\label{equipol1}
\tau: x_i \rightarrow x_i+1, \quad y_p \rightarrow y_p-\kappa,
\end{equation} 
leaving other roots the same (see (\ref{actionxy})).
Since the polynomials may have several pairs of common roots this action is in general multivalued.
Respectively, the element $\tau^{-1}$ acts on a pair of polynomials $(f,g)$ such that $f(t)$ and $g(t-\kappa-1)$ have common root
$x_i=y_p$ by the formula
\begin{equation}
\label{equipol2}
\tau^{-1}: x_i \rightarrow x_i-1, \quad y_p \rightarrow y_p+\kappa
\end{equation} 
(and identically on the other roots). A non-symmetry between the actions of $\tau$ and $\tau^{-1}$ is due to the choice of the coordinates $(x,y).$

Let us call a pair of monic polynomials $(f(t),g(t))$ {\it minimal} if the polynomials $f(t)$ and $g(t-\kappa-1)$ have no common roots. This means that we cannot apply to such a pair the action of the "lowering" element $\tau^{-1}$.


Recall that $\kappa$ is special if $\kappa = \pm \frac{p}{q}$ for some $1\leq p \leq m, \, 1\leq q \leq n.$
 
\begin{thm} 
The algebra $\Lambda_{n,m,\kappa}$ separates the orbits of $\Phi_\kappa$ if and only if 
$\kappa$ is not special.
\end{thm}

\begin{proof} 

In \cite{SV1} it was proved that for negative special $\kappa=-\frac{p}{q}$ the algebra $\Lambda_{n,m,\kappa}=P[V]^{\Phi_\kappa}$ is not finitely generated (see Th. 5 in \cite{SV1}). 

Let us show that the orbits are not separated in this case.
Again it is enough to consider the case $\kappa=-\frac{m}{n}.$ 

Consider the infinite orbits $O(a)$ from the proof of Theorem 3.2, containing the set (\ref{orb}). Since $O(a+m)=O(a)$ is invariant under the shift by $m$, it is clear that any polynomial invariant has the value on $O(a)$, which is independent on $a.$
On the other hand, it is also obvious that $O(0)$ and, say, $O(\sqrt 2)$ are different orbits, so the orbits are indeed not separated in this case.

Let us assume now that $\kappa$ is not negative special, so all orbits are finite.


\begin{lemma}
\label{minimal}
If $\kappa$  is not negative special  then any groupoid orbit $\mathcal O$ contains a minimal pair 
$(f(t),g(t))$. 
\end{lemma}

\begin{proof} 
Consider the orbit $\mathcal O$ of the pair $(f,g)$. Assume that the pair is not minimal, and let $x$ be a root of $f(t)$ such that $g(x-\kappa-1)=0$. Consider
$$
f_{1}(t)=\frac{t-(x-1)}{t-x}f(t),\quad
g_{1}(t)=\frac{t-(x-1)}{t-(x-\kappa-1)}g(t).
$$
It is easy to see that $(f,g)\sim(f_{1},g_{1})$ and moreover $(f_{1},g_{1}) \in \tau^{-1}(f,g)$. If  $(f_{1},g_{1})$ is not minimal we can apply this procedure again etc. Since $\kappa$ is not negative special, according to Theorem \ref{finite}
the orbit $\mathcal O$  contains only finite number of elements, which means that this procedure must stop. 
The last pair of the sequence is minimal. 
\end{proof}


\begin{lemma}
\label{minimal2}
If $\kappa$ is not special then any $E$-equivalence class $\mathcal E$ contains a unique minimal pair 
$(f(t),g(t))$. 
\end{lemma}

\begin{proof} 
We use the induction in $mn.$ It is convenient to allow the cases $m=0$ and $n=0$ as well. 
In these cases the claim is obviously true since any equivalence class consists of only one element. Indeed, if for example $n=0$ then $g \equiv 1$ and from (\ref{equipol}) we have 
$$\frac{\tilde f(t+\kappa)}{f(t+\kappa)}=\frac{\tilde f(t)}{f(t)},$$ which, since $\kappa\neq 0$, implies  that $\frac{\tilde f(t)}{f(t)}\equiv 1,$ so that $\tilde f = f.$

Consider now two minimal pairs $(f,g)$ and $({\tilde f},{\tilde g})$ satisfying the relation (\ref{equipol}).
Then, using induction assumption, we can assume that $f(t)$ and $\tilde f(t)$, as well as  $g(t)$ and $\tilde g(t)$, have no common roots. 

By minimality assumption we have also that $f(t+\kappa)$ and $g(t-1)$ have no common roots.
This implies that the relation (\ref{equipol}) is equivalent to two relations
\begin{equation}
\label{equipol3}
\frac{f(t)}{f(t+\kappa)}
=\frac{\tilde g(t-1)}{\tilde g(t)}, \quad
\frac{g(t-1)}{g(t)}
=\frac{\tilde f(t)}{\tilde f(t+\kappa)}.
\end{equation}
In particular, we have 
$$
f(t) \tilde g(t)= f(t+\kappa)\tilde g(t-1).
$$
Equating the sums of the roots of both sides we have
$$
x_1+\dots+x_n+\tilde y_1+\dots+\tilde y_m= x_1+\dots+x_n+\tilde y_1+\dots+\tilde y_m -n \kappa+ m,
$$
which implies that 
$
\kappa=\frac{m}{n}
$
is special, which is a contradiction.
\end{proof}

Consider now a groupoid orbit $\mathcal O$ and the corresponding $E$-equivalence class $\mathcal E$ containing $\mathcal O.$
In general, $\mathcal E$ may consist of several orbits. However, as we have seen, for non-special $\kappa$ every such orbit
must contain a minimal pair, and such a pair is unique in $\mathcal E$. This implies that $\mathcal O=\mathcal E,$ so the algebra $\Lambda_{n,m,\kappa}$ is indeed separating the orbits.

To complete the proof of the theorem we need to show that for special $\kappa$ this is not the case. It is enough to consider the case
$\kappa=\frac{m}{n}.$ 

Let 
$$
f(t)=\prod_{i=1}^n(t-x-(i-1)\kappa), \quad g(t)=\prod_{j=1}^m (t-y-(i-1)),
$$
$$
\tilde f(t)=\prod_{i=1}^n(t-y-m+(i-1)\kappa), \quad \tilde g(t)=\prod_{j=1}^m (t-x+j).
$$
Then it is easy to see that
$$
\frac{f(t)}{f(t+\kappa)}\frac{g(t-1)}{g(t)}=\frac{t-x}{t-x+n\kappa}\frac{t-y-m}{t-y}
$$
$$
\frac{\tilde f(t)}{\tilde f(t+\kappa)}\frac{\tilde g(t-1)}{\tilde g(t)}=\frac{t-y-m}{t-y-m+n\kappa}\,\,\,\frac{t-x}{t-x+m}.
$$
But for $\kappa=\frac{m}{n}$ we have $n\kappa=m$, so $(f,g) \sim (\tilde f, \tilde g)$.
Since for generic $x$ and $y$ both these pairs are minimal, this implies that the equivalence class of these two (equivalent) pairs contains at least two different orbits, and thus the algebra $\Lambda_{n,m,\kappa}$ is not separating.
\end{proof}

As an example, let us consider in more detail the simplest case
$n=m=1.$

Assume first that $\kappa \neq \pm 1$ is non-special. 
In that case the groupoid orbits are either single point orbits, or $2$-point orbits in $x,y$ coordinates having the form
$$(x,x)\sim (x+1, x-\kappa).$$

One can show that the algebra of invariants $P[V]^{\Phi_\kappa}$ coincides in this case with $\Lambda_{1,1,\kappa}$  and is generated by
$$I_1=\kappa x + y, \,\, I_2= (x-y)(x-y-\kappa-1), \,\, I_3=x(x-y)(x-y-\kappa-1).$$ 
In particular, both algebras separate the orbits in this case.

When $\kappa=-1$ then the groupoid orbits are either single point orbits, or the infinite sets of the form
$$\mathcal O=\{(x+l, x+l), \,\, l \in \mathbb Z \}.$$
In this case the algebra of invariants 
$$P[V]^{\Phi_{\kappa}}=\{f\in \mathbb C[x,y]: f(x,y)=c+(x-y)g(x,y), \,\, c \in \mathbb C, g(x,y)\in \mathbb C[x,y]\}$$
coincides with $\Lambda_{1,1,-1}$, which is not finitely generated and clearly does not separate the infinite orbits. 

Finally, when $\kappa=1$ then the action identifies $(x,x)\sim (x+1, x-1)$, or, in the coordinates
$$u=x-1/2, \,\, v =y+1/2$$
the points $(u, u+1)\sim (u+1,u).$
The algebra $\Lambda_{1,1,\kappa}$ coincides in this case with the algebra of the symmetric polynomials:
$$\Lambda_{1,1,1}=\mathbb C[u,v]^{S_2}$$
and does not distinguish single-point orbits $(u,v)$ and $(v,u)$. The full algebra of invariants $P[V]^{\Phi_1}$ is larger in this case: it is generated by 
$$I_1= u + v, \,\, I_2= u^2+v^2, \,\, I_3=u((u-v)^2-1)$$ 
and separates all orbits.

\section{Special values of parameter}

 We consider three special values $\kappa=\pm 1$ and $\kappa=-1/2.$
 
 {\bf Case 1: $\kappa =1.$} 
 
 This case is special for all $n$ and $m.$
 One can easily see that in the original coordinates $(u,v)$ the algebra 
 $\Lambda_{n,m,1}$ coincides with the algebra of symmetric polynomials
 $\mathcal C[u,v]^{S_{n+m}}$ and thus does not distinguish the orbits 
 outside the hyperplanes $u_i-v_j=\pm 1.$ 
 The full invariant algebra $P[V]^{\Phi_1}$ contains also the whole ideal $I_\Delta$ generated by
 $$\Delta=\prod_{i=1}^n\prod_{j=1}^m((u_i-v_j)^2-1),$$
 which is enough to separate all the orbits.
 
This algebra is a subalgebra of a larger algebra of {\it quasi-invariants} $Q_{n+m}$ studied in \cite{CV, CFV} in relation with the algebra of quantum integrals of Calogero-Moser system for special value of the parameter.
 
 {\bf Case 2: $\kappa =-1.$} 
In this case the relation (\ref{equipol}) 
$$\frac{f(t)}{f(t-1)}\frac{g(t-1)}{g(t)}=\frac{\tilde f(t)}{\tilde f(t-1)}\frac{\tilde g(t-1)}{\tilde g(t)}$$
 can be rewritten as
 $$\frac{f(t)}{\tilde f(t)}\frac{\tilde g(t)}{g(t)}= \frac{f(t-1)}{\tilde f(t-1)}\frac{\tilde g(t-1)}{g(t-1)}.
 $$ 
 This implies that in that case $(f(t), g(t)) \sim (\tilde f(t), \tilde g(t))$ iff
 $$
\frac{f(t)}{g(t)}=\frac{\tilde f(t)}{\tilde g(t)}.
 $$
The corresponding algebra of invariants $P[V]^{\Phi_1}$ is known as the algebra of {\it
supersymmetric polynomials}. It plays an important role in
geometry \cite{FP} and in the representation theory of Lie superalgebra $\mathfrak {gl}(n,m)$, 
see \cite{Serge0, SV2}. It coincides with $\Lambda_{n,m,-1}$ and is generated by the (super) power sums
$$
p_l(x)=x_{1}^l+\dots+x_{n}^l-x_{n+1}^l-\dots-x_{n+m}^l, \,\, l \in \mathbb N.
$$
It is not finitely generated, and, as we have seen already in the case $n=m=1,$ does not separate the infinite orbits.

\medskip

{\bf Case 3: $\kappa =-1/2.$} 
 Let us introduce  in this case
$$
\phi(t)= \frac{f(t)}{\tilde f(t)}\frac{\tilde g(t)}{g(t)}
\frac{\tilde g(t-1/2)}{g(t-1/2)}.
$$
One can check that the equivalence relation (\ref{equipol}) means that
 $\phi(t)=\phi(t-1)$, which implies that $\phi(t)\equiv1.$
Thus in this case $(f(t), g(t)) \sim (\tilde f(t), \tilde g(t))$ iff
$$
\frac{f(t)}{g(t)g(t-1/2)}=\frac{\tilde f(t)}{\tilde g(t)\tilde
g(t-1/2)}.
 $$
The corresponding algebra of invariants and the orbits have been studied in \cite{SV5} 
in relation with the symmetric superspace $X=GL(n,2m)/OSP(n,2m).$

For $n >1$ the algebra of invariants $\Lambda_{n,m,-1/2}$ is not finitely generated 
and does not separate the orbits. 
In particular, in the case $n=2, m=1$ we have 
the union of four lines given by
$$
u_1-u_2=2u_1+v_1=\pm\frac{1}{2},\,\,\, u_2-u_1=2u_2+v_1=\pm\frac{1}{2},
$$
consisting of infinite orbits
$$
O(a)=\{(l-\frac{1}{2}+a, l+a, -2l+\frac{1}{2}-2a), (l+\frac{1}{2}+a, l+a, -2l-\frac{1}{2}-2a),
$$
\begin{equation}
\label{orb2}
(l+a, l-\frac{1}{2}+a,-2l+\frac{1}{2}-2a),
(l+a, l+\frac{1}{2}+a, -2l-\frac{1}{2}-2a)\},
\end{equation}
where $l \in \mathbb Z$ and $a$ is a parameter. It is clear that all the polynomial invariants must be constant on these lines, 
and thus cannot distinguish the orbits with different values of $a.$

The case $n=1$ is special. For example, for $n=1, m=1$ we have the one-element orbits $(u=u_1,v=v_1)$ with $u+\frac{1}{2}v \neq \pm\frac{1}{4}$ 
and two-element orbits $$(a,-\frac{1}{2}-2a) \sim (a+1, -\frac{3}{2}-2a)$$  with arbitrary parameter $a.$ In that case the algebra of invariants is finitely generated by $$f_1=u+v, f_2=(2u+v)^2, f_3=u[(2u+v)^2-1/4].$$ It contains an ideal generated by $h=(2u+v)^2-1/4$, which together with $f_1$ separates all the orbits.

\section{Concluding remarks}

Although the results of  the classical invariant theory for finite groups in general cannot be extended to finite groupoids, we have shown that in our particular case for non-special $\kappa$ the situation is similar: the algebra of invariants is finitely generated and separates the orbits. 
Moreover, we have shown that already the subalgebra $\Lambda_{n,m,\kappa}$ separates the orbits ({\it separating subalgebra} in the terminology of invariant theory, see e.g. \cite{Kemper}).

In the case of special positive $\kappa$ we still do not know if the full algebra of invariants $P[V]^{\Phi_\kappa}$ separates the orbits. We conjecture that this is true. This should follow from the following more general result.

\smallskip

{\bf Conjecture.} {\it Suppose that all the orbits of an affine action $\Phi$ of a finite groupoid $\mathfrak G$ are finite.
Then the algebra of polynomial invariants $P[V]^{\Phi}$ is finitely generated and separates the orbits.}

\smallskip

This agrees with Kollar's results \cite{Kollar} (see, in particular, Proposition 25), where more general quotients by finite equivalence relations are studied.
\footnote{We are very grateful to Richard Thomas for pointing this out to us.}

The algebras of invariants of finite groups are known to have Cohen-Macaulay property \cite{Benson}. We conjectured in \cite{SV1} that this holds also for the associated graded algebra $gr (P[V]^{\Phi_\kappa})=gr (\Lambda_{n,m,\kappa})$ for generic $\kappa,$ which was recently proved in \cite{BCES}. 
It is interesting that that the conjectured set $\tilde B (n,m)$ of exceptional values of $\kappa$ from \cite{BCES} coincides with our set of special values, for which the algebra $\Lambda_{n,m,\kappa}$ separates the orbits. To see if there is a deep reason for that is an interesting question.

In particular, we can ask the following question (see the discussion of the finite group case in \cite{Kemper}). Suppose that all the orbits of an affine action $\Phi$ of a finite groupoid $\mathfrak G$ are finite.

{\bf Question.} {\it Is it true that the algebra of invariants $P[V]^{\Phi}$ is Cohen-Macaulay? If not, does it have a separating subalgebra, which is Cohen-Macaulay?}

Following \cite{SV2} one can consider also the action of the super Weyl groupoid on complex torus $V/\mathbb Z^{n+m}$ and ask the same questions for the corresponding algebra of (exponential) invariants. In particular, 
one can consider the following algebra $\Lambda_{n,m,q,t} \subset \mathbb C[z_1,\dots, z_n, w_1,\dots, w_m]$  from \cite{SV6}, consisting of polynomials $f(z_1,\dots, z_n, w_1,\dots, w_m)$, which are symmetric in $z_1,\dots, z_n$ and $w_1,\dots, w_m$ separately and satisfy the conditions
$$
f(z_1,\dots,  qz_j, \dots, z_n, w_1,\dots, w_m)=f(z_1, \dots, z_n, w_1,\dots, tw_p, \dots, w_m)
$$
whenever $z_j=w_p$ for some $j=1,\dots,n, p=1,\dots, m.$
The parameters $q,t$ are related by $t=q^\kappa.$ As it was shown in \cite{SV6} (see Theorem 5.1), the algebra $\Lambda_{n,m,q,t}$ is finitely generated if and only if
\begin{equation}
\label{macdoq}
t^iq^j \neq 1
\end{equation}
for all $1 \leq i \leq n, \, 1\leq j \leq m,$ in agreement with (\ref{specialp}). 
For generic $q,t$ it is generated by the deformed power sums
\begin{equation}
\label{defNewton}
 p_{r}(z,w, q,t)= \sum_{i=1}^n
{z_{i}^r}+\frac{1-q^r}{1-t^r}\sum_{j=1}^m {w_{j}^r}, \quad r \in \mathbb N.
\end{equation}
Cohen-Macaulay property of the algebras generated by the generalised power sums
was studied recently by Etingof and Rains \cite{ER}.


Finally, we have discussed here only the case of super Weyl groupoid $\mathcal W_{n,m}$ corresponding to the Lie superalgebra $\mathfrak gl(n,m),$ but similar results hold for the orthosymplectic Lie superalgebra $\mathfrak {osp}(n,2m)$ as well (cf. \cite{SV1}). The situation with the exceptional basic classical Lie superalgebras could bring some new interesting phenomena.
 
\section{Acknowledgements}
We are very grateful to Alexey Bolsinov,  Misha Feigin, Askold Khovanskii and Richard Thomas for useful discussions.

This work was partially supported by the Engineering and Physical Sciences Research Council (grant EP/J00488X/1). The work of ANS was done within the framework of a subsidy granted to the National Research University Higher School of Economics by the Government of the Russian Federation for the implementation of the Global Competitiveness Programme.

ANS is grateful to the Department of Mathematical Sciences of Loughborough University for the hospitality during the autumn semesters in 2012-14, and to Saratov State University for the support of his visit to Loughborough in November 2015.


\begin{thebibliography}{99}

\bibitem{AM}
M.F. Atiyah, I.G. Macdonald {\it Introduction to Commutative Algebra.} Addison-Wesley Publ. Co., Reading Massachusetts, 1969.

\bibitem{Benson}
D.J. Benson {\it Polynomial Invariants of Finite Groups.} Cambridge Univ. Press, Cambridge, 1993.

\bibitem{BCES}
A. Brookner, D. Corwin, P. Etingof, S.V. Sam
{\it On Cohen-Macaulayness of $S_n$-invariant subspace arrangements.}
arXiv:1410.5096. Int. Math. Res. Notices, 2016(7), 2104-2126.

\bibitem{B}
R. Brown {\it From groups to groupoids: a brief survey.} Bull. London Math. Soc., {\bf 19} (1987), 113-134. 

\bibitem{CV}
O.A. Chalykh, A.P. Veselov
{\it Commutative rings of partial differential operators and Lie
algebras.} Commun. Math. Phys. {\bf 126} (1990), 597--611.

\bibitem{CFV}
O.A. Chalykh, M.V. Feigin, A.P. Veselov 
{\it Multidimensional Baker--Akhiezer Functions and Huygens' Principle.}
Commun. Math. Phys. {\bf 206} (1999), 533--566.

\bibitem{ER}
P. Etingof and E. Rains (with an appendix by M. Feigin) {\it On Cohen-Macaulayness of algebras generated by power sums.}
arXiv:1507.07485. Commun. Math. Phys., 2016.


\bibitem{FP}
W. Fulton, P. Pragacz {\it Schubert Varieties and Degeneracy
Loci.} Lect. Notes in Math. {\bf 1689}, Springer-Verlag, Berlin Heidelberg New York, 1998.

\bibitem{Hum}
J. Humphreys {\it Reflection Groups and Coxeter Groups.} 
Cambridge Univ. Press, Cambridge, 1992.

\bibitem{Kemper}
G. Kemper {\it Separating invariants.} J. Symb. Comp. {\bf 44} (2009), 1212-1222.

\bibitem{Kollar}
J. Kollar {\it Quotients by finite equivalence relations.} Current Developments in Algebraic Geometry. MSRI publ. {\bf 59} (2011), 227-256.
 Edited by L. Caporaso, J. McKernan, M. Mustata, and M. Popa.
Cambridge University Press, Cambridge, 2012.





\bibitem{Serga}
V. Serganova {\it  On generalization of root systems.} Commun. in
Algebra {\bf 24(13)}, 1996, 4281--4299.


\bibitem{Serge0}
A.N. Sergeev {\it The invariant polynomial functions on Lie superalgebras.}
C.R. de l'Academie bulgare des Sciences, {\bf 35} (1982), n.5, 573-576.



\bibitem{SV1}
A.N. Sergeev, A.P. Veselov {\it Deformed quantum Calogero-Moser problems and Lie superalgebras.} Comm. Math. Phys. {\bf 245} (2004), no. 2, 249--278.


\bibitem{SV6}
A.N. Sergeev, A.P. Veselov {\it Deformed Macdonald-Ruijsenaars operators and super Macdonald polynomials.}  Comm. Math. Phys.,  {\bf 288} (2009), 653-675.

\bibitem{SV2}
A.N. Sergeev, A.P. Veselov {\it Grothendieck  rings of basic classical  Lie superalgebras.} Annals of Mathematics {\bf 173} (2011), 663-703.

\bibitem{SV5}
A.N. Sergeev, A.P. Veselov {\it Symmetric Lie superalgebras and deformed quantum Calogero-Moser problems.} arXiv:1412.8768.

\bibitem{Shaf}
I.R. Shafarevich {\it Basic algebraic geometry.} Springer-Verlag, Berlin Heidelberg, 1977.

\bibitem{Vinberg}
E.B. Vinberg {\it A Course in Algebra.} Graduate Studies in Math., Vol. 56. Amer. Math. Soc, Providence, Rhode Island, 2003.

\bibitem{W} 
A. Weinstein {\it Groupoids: unifying internal and external symmetry.} Notices Amer. Math. Soc. {\bf 43} (1996), 744--752. 



\end{thebibliography}
\end{document}